\documentclass[reqno,a4paper,10pt]{amsart}
\usepackage{amsmath, amsthm, amsfonts, amssymb, amscd, cancel, mathrsfs, mathtools, pinlabel, soul, stmaryrd}

\usepackage[T1]{fontenc}
\usepackage[utf8]{inputenc}
\usepackage{palatino}

\usepackage{graphicx}
\usepackage{overpic}
\usepackage{caption}
\usepackage{subcaption}
\usepackage{thmtools}
\usepackage{thm-restate}

\usepackage{todonotes}

\theoremstyle{definition}
\newtheorem{thm}{Theorem}

\newtheorem{prop}[thm]{Proposition}
\newtheorem{cor}[thm]{Corollary}

\newtheorem*{quest*}{Question}

\newtheorem{lemma}[thm]{Lemma}

\newtheorem*{conj*}{Conjecture}
\newtheorem*{probl*}{Problem}

\usepackage{hyperref}
\usepackage{xcolor}
\hypersetup{
	colorlinks=true,
	linkcolor={magenta},
	citecolor={blue},
	urlcolor={black}
}

\usepackage{cleveref}
\usepackage[backend=bibtex,maxbibnames=99,sorting=nty,style=alphabetic]{biblatex}
\renewbibmacro{in:}{%
  \ifentrytype{article}{}{\printtext{\bibstring{in}\intitlepunct}}%
}
\AtBeginBibliography{\footnotesize}
\DeclareFieldFormat{postnote}{#1}
\addbibresource{main.bib}

\title{Stein-fillability and positivity in the mapping class group}

\author{Vitalijs Brejevs}
\address{Department of Mathematics, University of Vienna, Vienna, Austria}
\email{vitalijs.brejevs@univie.ac.at}
\author{Andy Wand}
\address{School of Mathematics and Statistics, University of Glasgow, Glasgow, UK}
\email{andy.wand@glasgow.ac.uk}

\begin{document}
\begin{abstract}
    We construct an infinite family of non-positive open books with once-punctured torus pages that support Stein-fillable contact structures. Combined with a result of Wendl, this allows us to give a complete answer to a long-standing question about the mapping class group of a compact surface with boundary: namely, we conclude that the monoid of monodromies supporting Stein-fillable contact structures is equal to the monoid of positive monodromies if and only if the surface is planar.
\end{abstract}
\maketitle
\vspace{-1em}
\section{Introduction}
\label{sec:intro}

A fundamental result of Giroux~\cite{giroux2002} states that for a closed connected oriented 3-manifold $Y$ there exists a one-to-one correspondence
    \begin{equation*}
    \label{eq:giroux}
        \vspace{.1em}
        \left\{ \frac{\textrm{contact structures on } Y}{\textrm{isotopy}}\right\} \longleftrightarrow \left\{ \frac{\textrm{open books for } Y}{\textrm{positive stabilisation}} \right\},
        \vspace{.1em}
    \end{equation*}
which allows one to consider questions in contact and symplectic geometry via the proxy of surface mapping class groups. This approach has proved fruitful, in particular, in the study of fillings of contact manifolds, i.e., symplectic 4-manifolds $(X, \omega)$ that contain the given contact 3-manifold $(Y, \xi)$ as their boundary in a compatible way. Stein fillings are particularly amenable to the view through the lens of mapping class groups since every factorisation into positive Dehn twists of the monodromy of an open book $(\Sigma, \varphi)$ supporting $(Y, \xi)$ yields a Lefschetz fibration that carries a Stein structure and fills $(Y, \xi)$~\cite{eliashberg1990, gompf1998}. Furthermore, it follows from~\cite{giroux2002} as well as results of Loi and Piergallini~\cite{loipiergallini2001}, Akbulut and Özbağcı~\cite{akbulutozbagci2001} and Plamenevskaya~\cite{plamenevskaya2004} that a contact manifold is Stein-fillable if and only if the monodromy of \emph{some} supporting open book admits such a positive factorisation. This prompts a natural question: does this equivalence hold for \emph{every} supporting open book $(\Sigma, \varphi)$, or, in other words, is the implication
\begin{equation*}
    \label{eq:corr}
    \tag{$\dagger$}
        (Y, \xi) \textrm{ is Stein-fillable} \enspace \Longrightarrow \enspace \varphi \textrm{ has a positive factorisation}
\end{equation*}
true for all open books $(\Sigma, \varphi)$ supporting $(Y, \xi)$? 

This question can also be phrased in terms of monoids in the mapping class group $\Gamma_{g,n}$ of a compact oriented surface $\Sigma_{g,n}$ with genus $g \geq 0$ and $n>0$ boundary components: denoting by $\Gamma^{\textrm{\,Stein}}_{g,n} \subset \Gamma_{g,n}$ the monoid of mapping classes appearing as monodromies of open books that support Stein-fillable contact structures\footnote{The monoid structure of $\Gamma^{\textrm{\,Stein}}_{g,n}$ was observed in~\cite{BEvHM2010}.}, and by $\Gamma^+_{g,n} \subset \Gamma^{\textrm{\,Stein}}_{g,n}$ the monoid of classes admitting positive factorisations, we have that these monoids coincide precisely when~\eqref{eq:corr} holds.

Wendl has shown in~\cite{wendl2010} that~\eqref{eq:corr} holds if $\Sigma$ is planar, thus providing a powerful tool that has been of great use in classifying fillings \cite{plamenevskaya2010} as well as shedding light on the support genus of certain contact manifolds \cite{wand2011,ggp2020}. Soon after, however, the second author in~\cite{wand2015}, and, independently, Baker, Etnyre and Van Horn-Morris in~\cite{BEvHM2010}, exhibited Stein-fillable contact manifolds supported by open books with the page $\Sigma_{2,1}$ and non-positive monodromies. In~\cite{brejevswand2023}, the authors improved on this result by constructing infinitely many counterexamples to~\eqref{eq:corr} with the page $\Sigma_{g,n}$ for any pair $(g, n) \neq (1, 1)$ with $g > 0$. The remaining case was studied in~\cite{lisca2014} by Lisca who proved that~\eqref{eq:corr} holds for open books with page $\Sigma_{1,1}$ if the supported $Y$ is a Heegaard Floer $L$-space. Our main result is to exhibit an infinite family of counterexamples to~\eqref{eq:corr} with page $\Sigma_{1,1}$.

\begin{thm}
\label{thm:main}
    Let $ k \geq 0 $. Then $ (\Sigma_{1,1}, \varphi_k) $ with $ \varphi_k = \tau_\delta \tau_\alpha^5 \tau_\beta^{-(15+k)}$, where $\alpha$ and $\beta$ are simple closed curves intersecting exactly once, and $\delta$ a boundary-parallel curve (see Figure~\ref{fig:monodromy}), is an open book supporting a Stein-fillable contact manifold, but $ \varphi_k $ does not admit a positive factorisation.
\end{thm}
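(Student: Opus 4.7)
The statement has two halves, and the plan is to establish them separately.

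For the Stein-fillability assertion, I would exhibit, for each $k \geq 0$, a positive Dehn-twist factorisation of some positive stabilisation of $\varphi_k$. By the Giroux correspondence a positive stabilisation supports the same contact structure, and a positive factorisation of a monodromy yields a Lefschetz fibration and hence a Stein filling by \cite{eliashberg1990, gompf1998}. The natural attack is to attach one or more bands to $\Sigma_{1,1}$ and use the standard relations in $\Gamma_{1,1}$---the braid relation $\tau_\alpha \tau_\beta \tau_\alpha = \tau_\beta \tau_\alpha \tau_\beta$ and the chain relation $(\tau_\alpha \tau_\beta)^6 = \tau_\delta$---together with a lantern or daisy substitution that becomes available on the stabilised page, to rewrite $\tau_\delta \tau_\alpha^5 \tau_\beta^{-(15+k)}$, multiplied by the stabilising twists, as an explicitly positive word. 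This construction should parallel the one carried out by the authors in \cite{brejevswand2023} for pages $\Sigma_{g, n}$ with $(g, n) \neq (1, 1)$, and the exponents $5$ and $15 + k$ are presumably dictated by what is required to make the substitution fit inside the once-punctured torus.

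For the non-positivity assertion, the plan is to combine an abelianisation obstruction with a trace computation in $\mathrm{SL}(2, \Z)$. The abelianisation $\Gamma_{1,1}^{\mathrm{ab}} \cong \Z$ sends every non-separating Dehn twist to $1$ and, via the chain relation, sends $\tau_\delta$ to $12$, so $[\varphi_k] = 12 + 5 - (15 + k) = 2 - k$. A positive factorisation with $N$ non-separating and $j$ boundary-parallel twists would require $N + 12 j = 2 - k$, which already rules out $k \geq 3$. For $k \in \{0, 1, 2\}$ we must have $j = 0$ and $N \leq 2$; projecting $\varphi_k$ to $\Gamma_{1, 1} / \langle \tau_\delta \rangle \cong \mathrm{SL}(2, \Z)$ gives
\[
\begin{pmatrix} 1 & 5 \\ 0 & 1 \end{pmatrix} \begin{pmatrix} 1 & 0 \\ 15 + k & 1 \end{pmatrix} = \begin{pmatrix} 76 + 5 k & 5 \\ 15 + k & 1 \end{pmatrix}
\]
with trace $77 + 5 k$. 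Every $\mathrm{SL}(2, \Z)$-conjugate of the parabolic $\bigl(\begin{smallmatrix} 1 & 1 \\ 0 & 1 \end{smallmatrix}\bigr)$ is of the form $\bigl(\begin{smallmatrix} 1 - a c & a^2 \\ -c^2 & 1 + a c \end{smallmatrix}\bigr)$ with $\gcd(a, c) = 1$, and a direct calculation shows that a product of at most two such matrices has trace of the form $2 - d^2$ for some $d \in \Z$, hence at most $2$. Since $77 + 5 k > 2$ for every $k \geq 0$, this is a contradiction.

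The main obstacle I expect is the Stein-fillability step: producing an explicit positive factorisation on a stabilised page and verifying it, which requires genuine geometric input---identifying the right stabilisation and the sequence of mapping class group relations that turn $\varphi_k$ into a positive word on the enlarged surface. The non-positivity step, by contrast, reduces to the uniform computation above via the two projections $\Gamma_{1, 1} \to \Z$ and $\Gamma_{1, 1} \to \mathrm{SL}(2, \Z)$.
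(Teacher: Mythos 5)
Your non-positivity argument is correct, and it is a genuinely different route from the paper's. The paper transports the question to the braid group via the isomorphism $\Psi\colon B_3\to\Gamma_{1,1}$ and the Ito--Kawamuro identification $\Gamma^+_{1,1}\cong QP(3)$, puts $\beta_0=\Delta^4\sigma_1^5\sigma_2^{-15}$ into Orevkov's reduced standard form and applies his quasipositivity obstruction, then handles $k\geq 1$ by monotonicity. Your argument instead uses only the abelianisation $\Gamma_{1,1}\to\Z$ (each non-separating twist $\mapsto 1$, $\tau_\delta\mapsto 12$, giving $N+12j=2-k$, which kills $k\geq 3$) together with the homological representation $\Gamma_{1,1}\to\mathrm{SL}(2,\Z)$: the trace identity $\mathrm{tr}(M_1M_2)=2-(a_1c_2-a_2c_1)^2$ for conjugates of the standard parabolic does show that a product of at most two positive non-separating twists has trace at most $2$, while the image of $\varphi_k$ has trace $77+5k$, and the argument is insensitive to the sign conventions. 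This is more elementary than the paper's obstruction and completely avoids Orevkov's algorithm, at the cost of being tailored to these particular words, whereas the braid-theoretic machinery decides positivity for arbitrary elements of $\Gamma_{1,1}$.

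The Stein-fillability half, however, is a genuine gap: you state the strategy (find a positive factorisation of some positive stabilisation of $\varphi_k$, in the spirit of the authors' earlier work for $(g,n)\neq(1,1)$) but do not produce the stabilisation, the curves, or the relation chain, and you yourself flag this as the main obstacle. The paper does something entirely different and never exhibits such a factorisation: it shows $\xi_k$ is tight because the Ozsv\'ath--Szab\'o contact invariant is nonzero by Baldwin's results on once-punctured-torus open books, identifies $Y_k$ with the double cover of $S^3$ branched over the closure of $\beta_k=(\sigma_1\sigma_2)^6\sigma_1^5\sigma_2^{-(15+k)}$ and, by a Kirby-calculus computation, with the surgered manifold $\mathrm{Wh}(5,-(15+k))$ on the Whitehead link, and then invokes the Min--Nonino classification, which says every tight contact structure on such surgeries is Stein-fillable. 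In particular your guess that the exponents $5$ and $15+k$ are dictated by a lantern or daisy substitution is off the mark: they are chosen so that the surgery coefficients $(5,-(15+k))$ satisfy the hypotheses of Min--Nonino's theorem (and so that tightness holds). While a positive factorisation of some stabilised open book must exist once Stein-fillability is known, producing one explicitly for all $k$ is precisely the hard content your proposal leaves unproved, so as written the first half of the theorem is not established.
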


\begin{figure}
        \centering
        \includegraphics[width=0.40\textwidth]{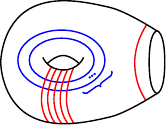}
        \begin{overpic}[width=\linewidth,height=0.001in]{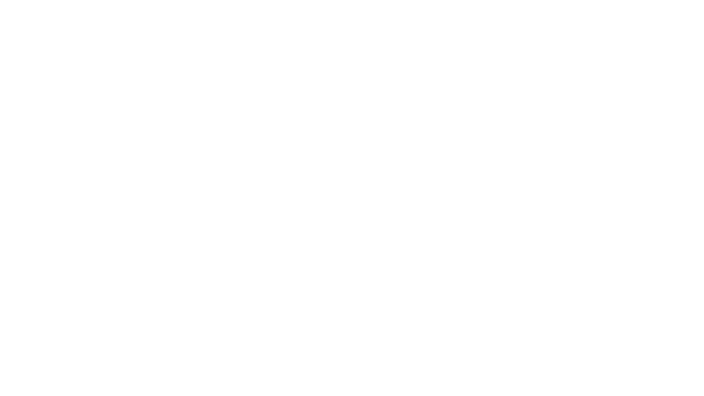}
        \put(50,7) {\rotatebox{35}{$\color{blue} \times (15 + k)$}}
        \put(43,1) {$\alpha$}
        \put(43,27.5) {$\beta$}
        \put(67,7) {$\delta$}
        \end{overpic}
        \caption{
            The open book $ \left(\Sigma_{1,1}, \tau_\delta \tau_\alpha^5 \tau_\beta^{-(15+k)}\right) $ for $ k \geq 0 $, where $\tau_\gamma$ denotes a positive Dehn twist about the curve $\gamma$.
        } \label{fig:monodromy}
    \end{figure}

Theorem~\ref{thm:main} gives an affirmative answer to the question whether the inclusion $\Gamma^+_{1,1} \subset \Gamma^{\textrm{\,Stein}}_{1,1}$ is strict asked by Etnyre and Van Horn-Morris in~\cite[Question~6.2]{etnyrevhm2015}. Moreover, as shown in~\cite[Remark~5.3]{lisca2014} (cf.~\cite[Remark~1.2]{brejevswand2023}), by attaching 1-handles to the page of $(\Sigma_{1,1}, \varphi_k)$ for any $k \geq 0$ and extending the monodromy by the identity on the co-cores of the 1-handles, one can produce a non-positive open book with any non-planar page that supports a Stein-fillable contact structure. Hence, Theorem~\ref{thm:main} in combination with Wendl's result gives a complete answer to~\eqref{eq:corr}:

\begin{cor}
The monoids $\Gamma^+_{g,n}$ and $\Gamma^{\textrm{\,Stein}}_{g,n}$ in the mapping class group of a compact oriented surface with boundary coincide if and only if the surface is planar.
\end{cor}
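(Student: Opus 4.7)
The plan is to prove each direction of the biconditional separately. Note first that the inclusion $\Gamma^+_{g,n} \subseteq \Gamma^{\textrm{\,Stein}}_{g,n}$ holds for every pair $(g,n)$, since a positive Dehn-twist factorisation of the monodromy determines a Lefschetz fibration carrying a Stein structure on its total space (Eliashberg--Gompf, as recalled in the introduction). For the implication ``planar $\Rightarrow$ equal'', I would invoke Wendl's theorem~\cite{wendl2010}: if $\Sigma$ is planar, then every monodromy supporting a Stein-fillable contact manifold admits a positive factorisation, giving the reverse inclusion $\Gamma^{\textrm{\,Stein}}_{0,n} \subseteq \Gamma^+_{0,n}$ for every $n \geq 1$.

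For the contrapositive of the ``equal $\Rightarrow$ planar'' direction, I need to exhibit, for each pair $(g,n)$ with $g \geq 1$, a class in $\Gamma^{\textrm{\,Stein}}_{g,n} \setminus \Gamma^+_{g,n}$. The case $(g,n) = (1,1)$ is precisely what Theorem~\ref{thm:main} supplies: each $\varphi_k$ lies in $\Gamma^{\textrm{\,Stein}}_{1,1}$ but not in $\Gamma^+_{1,1}$. To cover every other non-planar pair simultaneously, I would follow the construction recorded by Lisca in~\cite[Remark~5.3]{lisca2014} (cf.~\cite[Remark~1.2]{brejevswand2023}): attach $2g + (n-1)$ 1-handles to the page $\Sigma_{1,1}$ in the appropriate combinatorial pattern to realise $\Sigma_{g,n}$, and extend $\varphi_k$ by the identity on the co-cores. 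The resulting open book supports the connect-sum of the original Stein-fillable manifold with copies of the standard contact $S^1 \times S^2$, so Stein-fillability is preserved.

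The step I expect to be most delicate, and the main obstacle of the argument, is verifying that non-positivity persists under this 1-handle extension: a priori, enlarging the page could introduce new positive relations unavailable on $\Sigma_{1,1}$. This is handled by the cited remark, which shows that any positive factorisation on $\Sigma_{g,n}$ would descend, after capping off the added 1-handles, to a positive factorisation of $\varphi_k$ on $\Sigma_{1,1}$, contradicting Theorem~\ref{thm:main}. (As an alternative route, one could instead directly cite the counterexamples of~\cite{brejevswand2023}, which already cover all non-planar pairs with $(g,n) \neq (1,1)$; Theorem~\ref{thm:main} then fills in the single missing case.) Combining this with the first paragraph yields the desired biconditional.
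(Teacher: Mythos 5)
Your proposal is correct and follows essentially the same route as the paper: Wendl's theorem handles the planar direction, Theorem~\ref{thm:main} provides the $(1,1)$ counterexamples, and the 1-handle stabilisation argument of \cite[Remark~5.3]{lisca2014} (cf.~\cite[Remark~1.2]{brejevswand2023}) propagates them to all non-planar pages exactly as in the paper's discussion following Theorem~\ref{thm:main}. Your noted alternative of citing \cite{brejevswand2023} directly for the other non-planar cases is also valid, but the main argument coincides with the paper's.
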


\subsection*{Acknowledgements} We would like to thank John Etnyre, David Gay and Gordana Mati\'c for illuminating discussions, as well as the anonymous referee for helpful comments. VB was supported by the FWF grant `Cut and Paste Methods in Low Dimensional Topology' and gratefully acknowledges the hospitality of the University of Glasgow, where parts of this work were carried out.

\section{Preliminaries}
\label{sec:prelim}

In this section we briefly recount standard definitions and facts pertaining to our objects of interest in contact topology. We then focus on the group of 3-stranded braids $B_3$ to draw the connection between quasipositive elements of $B_3$ and positive classes in the mapping class group $\Gamma_{1,1}$ of the one-holed torus.

\subsection{Open books, contact manifolds and Stein fillings}

Let $\Sigma_{g,n}$ denote a compact oriented surface of genus $g \geq 0$ with $n \geq 1$ boundary components. Write $ \Gamma_{g,n} $ for the {\emph{mapping class group} of $ \Sigma_{g,n} $ that consists of isotopy classes of orientation-preserving diffeomorphisms $ \Sigma_{g,n} \rightarrow \Sigma_{g,n} $ restricting to the identity on $ \partial \Sigma_{g,n} $; by standard convention, we confuse classes in $\Gamma_{g,n}$ and their representatives. Given $ \varphi \in \Gamma_{g,n} $, say that $ \varphi $ admits a \emph{positive factorisation}, or simply that $\varphi$ is \emph{positive}, if it can be written as a product of positive Dehn twists about essential simple closed curves in $ \Sigma_{g,n} $. We denote by $ \Gamma^+_{g,n} $ the \emph{positive monoid} of $ \Gamma_{g,n} $, i.e., the monoid comprised of isotopy classes of positively factorisable maps.

For our purposes, an \emph{open book} is a pair $(\Sigma, \varphi)$, where $\Sigma = \Sigma_{g,n}$ is called the \emph{page} and $\varphi \in \Gamma_{g,n}$ the \emph{monodromy}. One recovers a closed connected oriented 3-manifold $Y$ from $(\Sigma, \varphi)$ by considering the mapping torus 
    \[
        M_\varphi = [0, 1] \times \Sigma \,/\, (0, \varphi(x)) \sim (1, x)
    \]
and identifying boundary components of $M_\varphi$ with $ \bigsqcup_n S^1 \times S^1 $ such that in each copy of $ S^1 \times S^1 $ the first factor is the quotient of the unit interval and the second comes from a connected component of $ \partial \Sigma $. Then $Y$ is the result of glueing in solid tori $ \bigsqcup_n D^2 \times S^1 $ by the identity map 
    \[
        \bigsqcup_n \partial D^2 \times S^1 \rightarrow \bigsqcup_n S^1 \times S^1.
    \]
Call the link $L \subset Y$ consisting of the cores $ \bigsqcup_n \{ 0 \} \times S^1 $ of the solid tori the \emph{binding} of $(\Sigma, \varphi)$ and identify each page with the quotient of $ \{ \theta \} \times \Sigma$ in $M_\varphi$ for $\theta \in [0,1]$. We say that $Y'$ \emph{admits} an open book $(\Sigma, \varphi)$ if $Y'$ is diffeomorphic to $Y$ constructed as above. Note that conjugating $\varphi$ does not change the diffeomorphism class of $Y$.

Recall that a \emph{(positive) contact structure} on $ Y $ is an oriented plane field $ \xi \subset TY $ such that $ \xi = \ker \alpha $ for some $ \alpha \in \Omega^1(Y) $ satisfying $ \alpha \wedge \mathrm{d} \alpha > 0 $, and call the pair $(Y, \xi)$ a \emph{contact manifold}. Two contact manifolds $(Y, \xi)$ and $(Y', \xi')$ are \emph{contactomorphic} if there exists a diffeomorphism $ Y \rightarrow Y' $ that induces a map carrying $ \xi $ to $ \xi' $. Say that an open book $(\Sigma, \varphi)$ for $Y$ \emph{supports} $ (Y, \xi) $ if $ \alpha $ is positive on the binding and $ \mathrm{d} \alpha $ is positive on the interior of each page. Thurston and Winkelnkemper proved in~\cite{thurstonwinkelnkemper1975} that every open book for $ Y $ supports some $(Y, \xi)$. Furthermore, as noted in the introduction, by the work of Giroux~\cite{giroux2002} there exists a bijection between contact structures $\xi$ on $Y$ modulo contact isotopy and open books $(\Sigma, \varphi)$ that support $ (Y, \xi) $ modulo \emph{positive stabilisation}, i.e., modulo passing to an open book $(\Sigma', \varphi')$, where $\Sigma'$ is obtained from $\Sigma$ by attaching a 1-handle $h$ and $\varphi' = \tau_\alpha \varphi$ for $\tau_\alpha$ a positive Dehn twist about a simple closed curve $\alpha \subset \Sigma'$ that intersects the co-core of $h$ once. This is consistent since the manifold $(Y', \xi')$ supported by $(\Sigma', \varphi')$ is contactomorphic to $(Y, \xi)$.

A \emph{Stein surface} is defined as a complex surface $ W $ equipped with a Morse function $ f : W \rightarrow \mathbb{R} $ such that for every regular point $ c $ of $ f $, the level set $ f^{-1}(c) $ carries a contact structure $ \xi_c $, yielded by the complex tangencies, that induces an orientation of $ f^{-1}(c) $ that agrees with the orientation of $ f^{-1}(c) $ viewed as the boundary of the complex manifold $ f^{-1}((-\infty,c]) $. Given $ (Y, \xi) $, say that it is \emph{Stein-fillable} if $ Y $ is orientation-preserving diffeomorphic to some such $ f^{-1}(c) $ and $ \xi $ is isotopic to $ \xi_c $. For $ (Y, \xi) $ to be Stein-fillable, $ \xi $ must necessarily be \emph{tight}, i.e., there must not exist an embedded disk $ D^2 \subset Y $ that is tangent to $ \xi $ along $ \partial D^2 $; if such a disk exists, $ \xi $ is \emph{overtwisted}. Recall from Section~\ref{sec:intro} that $ (Y, \xi) $ is Stein-fillable if and only if there exists an open book supporting $ (Y, \xi) $ whose monodromy admits a positive factorisation.

\subsection{Quasipositive 3-braids and positive mapping classes}
\label{subsec:3braids}

Given the braid group on $n$ strands $B_n$, we say that $\beta \in B_n$ is \emph{quasipositive} if it can be written as a product of conjugates of the standard generators $\sigma_1, \dots, \sigma_{n-1}$ of $B_n$. Note that quasipositivity is preserved by conjugation and write $QP(n)$ for the monoid of quasipositive $n$-braids; we will be concerned with the case $n = 3$. Recall that $B_3$ admits a  presentation $\left\langle \sigma_1, \sigma_2 \mid \sigma_1 \sigma_2 \sigma_1 = \sigma_2 \sigma_1 \sigma_2 \right\rangle$ and write $\Delta = \sigma_1 \sigma_2 \sigma_1 = \sigma_2 \sigma_1 \sigma_2$ in the following. Also, observe that
    \[
    \label{eq:deltarel}
    \tag{$*$}
        \sigma_1^{\pm 1} \Delta^{\pm 1} = \Delta^{\pm 1} \sigma_2^{\pm 1} \quad \textrm{and} \quad \sigma_2^{\pm 1} \Delta^{\pm 1} = \Delta^{\pm 1} \sigma_1^{\pm 1}.
    \]
    
It is well-known (see, e.g.,~\cite[Chapter~9]{farbmargalit2011}) that there exists an isomorphism $\Psi : B_3 \rightarrow \Gamma_{1,1}$ given by $\Psi(\sigma_1) = \tau_\alpha$ and $\Psi(\sigma_2) = \tau_\beta$, where $\tau_\alpha$ and $\tau_\beta$ are the positive Dehn twists about, respectively, a meridian $\alpha $ and a longitude $\beta$ of $\Sigma_{1,1}$ that intersect transversely once (q.v.~Figure~\ref{fig:monodromy}). Moreover, we have the following folklore result proved by Ito and Kawamuro in~\cite{itokawamuro2019}:

\begin{thm}[{\cite[Theorem~3.2]{itokawamuro2019}}]
    \label{thm:qp}
    The group isomorphism $\Psi : B_3 \rightarrow \Gamma_{1,1}$ restricts to a monoid isomorphism $\Psi|_{QP(3)} : QP(3) \rightarrow \Gamma_{1,1}^+ $.
\end{thm}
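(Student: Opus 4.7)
The plan is to prove the two inclusions $\Psi(QP(3)) \subseteq \Gamma_{1,1}^+$ and $\Gamma_{1,1}^+ \subseteq \Psi(QP(3))$ separately.

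For the forward inclusion, I would take a quasipositive expression $\beta = \prod_{i=1}^N w_i \sigma_{k_i} w_i^{-1}$ with $k_i \in \{1,2\}$ and apply $\Psi$ to obtain
\[
\Psi(\beta) \;=\; \prod_{i=1}^N \Psi(w_i)\, \tau_{\gamma_i}\, \Psi(w_i)^{-1},
\]
where $\gamma_i \in \{\alpha,\beta\}$. Using the standard identity $f \tau_\gamma f^{-1} = \tau_{f(\gamma)}$ for any mapping class $f$, each factor is a positive Dehn twist about the essential simple closed curve $\Psi(w_i)(\gamma_i)$, and so $\Psi(\beta) \in \Gamma_{1,1}^+$.

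The reverse inclusion is the one with content. The key observations are that $QP(3)$ is closed under products (from the definition) and under conjugation (since $g\prod w_i\sigma_{k_i}w_i^{-1}g^{-1} = \prod(gw_i)\sigma_{k_i}(gw_i)^{-1}$), so it suffices to show $\Psi^{-1}(\tau_\gamma) \in QP(3)$ for every essential simple closed curve $\gamma \subset \Sigma_{1,1}$. I would then split into cases according to the topological type of $\gamma$. A short Euler-characteristic argument shows that every essential simple closed curve in $\Sigma_{1,1}$ is either non-separating or boundary-parallel.

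In the non-separating case, invoke the change-of-coordinates principle: $\Gamma_{1,1}$ acts transitively on isotopy classes of non-separating essential simple closed curves, so there exists $f \in \Gamma_{1,1}$ with $f(\alpha) = \gamma$, whence $\tau_\gamma = f\tau_\alpha f^{-1}$ and $\Psi^{-1}(\tau_\gamma) = \Psi^{-1}(f)\,\sigma_1\,\Psi^{-1}(f)^{-1}$ is a single conjugate of $\sigma_1$, hence quasipositive. In the boundary-parallel case $\gamma = \delta$, the chain relation for two curves meeting once in $\Sigma_{1,1}$ reads $(\tau_\alpha \tau_\beta)^6 = \tau_\delta$, so $\Psi^{-1}(\tau_\delta) = (\sigma_1\sigma_2)^6 = \Delta^4$, which is a positive word in $\sigma_1,\sigma_2$ and a fortiori quasipositive.

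The conceptual content is light and no step should be a real obstacle; the only thing to watch is that the definition of $\Gamma_{1,1}^+$ tacitly allows boundary-parallel Dehn twists (otherwise $\tau_\delta$ from \cref{thm:main} would not live in the monoid), which is precisely what the chain-relation step takes care of. All remaining ingredients — the presentation of $B_3$, the fact that $\Psi(\sigma_i)$ are the claimed Dehn twists, transitivity on non-separating curves, and the chain relation — are standard and recorded in Farb–Margalit.
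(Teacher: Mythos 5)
Your argument is correct, but note that the paper does not prove this statement at all: it is imported as a black box from Ito--Kawamuro (their Theorem~3.2), where it is recorded as a folklore fact, so there is no internal proof to compare against. What you have written is essentially the standard folklore argument that makes the citation unnecessary in this special case. The forward inclusion via $f\tau_\gamma f^{-1}=\tau_{f(\gamma)}$ is fine, and the reverse inclusion correctly reduces to single twists, using that every essential simple closed curve in $\Sigma_{1,1}$ is non-separating or boundary-parallel, the change-of-coordinates principle (which does hold for mapping classes fixing $\partial\Sigma_{1,1}$ pointwise), and the two-chain relation $(\tau_\alpha\tau_\beta)^6=\tau_\delta$, whose preimage $(\sigma_1\sigma_2)^6=\Delta^4$ is a positive, hence quasipositive, braid. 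Your caveat about whether ``essential'' admits boundary-parallel curves is the right thing to flag, though it is harmless under either convention: even if boundary-parallel twists are excluded from the definition of $\Gamma^+_{1,1}$, the twist $\tau_\delta$ still lies in the positive monoid via the chain relation, and your reverse inclusion simply loses one case. The only cosmetic blemish is the reuse of $\beta$ for both a braid and the curve of Figure~\ref{fig:monodromy}, which you should disambiguate.
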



An algorithm to determine whether a given 3-braid is quasipositive was first described in~\cite{orevkov2004} and later improved in~\cite{orevkov2015} by Orevkov. Thus, Theorem~\ref{thm:qp} enables us to determine whether any $\varphi \in \Gamma_{1,1}$ is positive by applying the algorithm to the preimage $\Psi^{-1} (\varphi) \in B_3$. The first step is finding a representative $\beta$ of the conjugacy class of $\Psi^{-1} (\varphi)$ in \emph{standard form}, i.e., 
    \[
        \beta = \Delta^p w^+ = \Delta^p \sigma_1^{a_1} \sigma_2^{a_2} \sigma_1^{a_3} \sigma_2^{a_4} \dots,
    \]
where $w^+$ is a word in $m \geq 0$ syllables that alternate between powers of $\sigma_1$ and $\sigma_2$, $a_i > 0$ for all $i = 1, \dots, m$ and $p \in \mathbb{Z}$.\footnote{Note that such $\beta$ always exists since one can write $\sigma_1^{-1} = \Delta^{-1} \sigma_1 \sigma_2 $ and $\sigma_2^{-1} = \Delta^{-1} \sigma_2 \sigma_1$, collect the $\Delta^{-1}$ factors on the left using~\eqref{eq:deltarel}, and conjugate appropriately to ensure that the first syllable in $w^+$ is a power of $\sigma_1$.} If one of the following also holds, say that $\beta$ is in \emph{reduced standard form}:
    \begin{enumerate}
        \item $m \leq 1$;
        \item $m = 2$, $a_1 = a_2 = 1$ and $p \equiv 0 \ \mathrm{mod} \ 2$; or
        \item $m \equiv p \ \mathrm{mod} \ 2$ and $a_i \geq 2$ for all $i = 1, \dots, m$.
    \end{enumerate}

For a complete description of the algorithm, we refer the reader to~\cite[Section~6]{orevkov2015}. However, for the purpose of proving Theorem~\ref{thm:main}, the following quasipositivity obstruction will be sufficient.

\begin{prop}[{\cite[Proposition~6.5(b)]{orevkov2015}}]
\label{prop:nqp}
    Let $\beta$ be given in reduced standard form and denote by $e(\beta)$ the exponent sum of $\beta$, i.e., $e(\beta) = 3p + \sum_{i=1}^m a_i$. Then if $p < 0$ and $\beta$ is quasipositive, we have $0 < p + m < 2 e(\beta)$.
\end{prop}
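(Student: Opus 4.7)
The plan is to derive both inequalities by combining two inputs: (i) the sharp form of the Bennequin inequality for closures of quasipositive braids, due to Rudolph, which identifies the obvious Bennequin surface with a slice-genus-minimising surface; and (ii) the explicit arithmetic afforded by (\ref{eq:deltarel}) when one manipulates the reduced standard form.

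First I would extract the topological content. For a 3-braid $\beta$ in reduced standard form, the self-linking number of the transverse closure satisfies $\mathrm{sl}(\widehat{\beta}) = e(\beta) - 3$. If $\beta$ is quasipositive, Rudolph's theorem gives $\mathrm{sl}(\widehat{\beta}) = -\chi_4(\widehat{\beta})$, and since $\chi_4 \leq 1$, this already yields $e(\beta) \geq 2$. So the right-hand side $2 e(\beta)$ is automatically at least $4$, and the upper bound has real content only when the combinatorial data are balanced.

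Next, for the upper bound $p + m < 2 e(\beta)$, substituting $e(\beta) = 3p + \sum a_i$ rearranges the inequality to $\sum_{i=1}^m (2 a_i - 1) > -5 p$. In case (3) of the reduced standard form one has $a_i \geq 2$, so the left-hand side is at least $3m$ and the claim reduces to $3m + 5p > 0$; the exceptional cases (1) and (2) have bounded size and can be handled by direct inspection. The bound then follows from the lower inequality $p + m > 0$ together with $e(\beta) \geq 2$, since these force $m$ to dominate $|p|$ by a sufficient margin.

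For the lower bound $p + m > 0$, my plan is to induct on the number of bands $N$ in a minimal quasipositive factorisation $\beta = \prod_{k=1}^N w_k \sigma_{i_k} w_k^{-1}$, noting that $N = e(\beta)$. The base case is trivial. For the inductive step, I would analyse how passing from $\beta$ to $\beta \cdot (w \sigma_i w^{-1})$ changes the reduced standard form: using (\ref{eq:deltarel}) to migrate any stray $\Delta^{\pm 1}$ factors to the left produces a controlled update $(p, m) \mapsto (p', m')$ in which $p$ decreases by at most one per band and $m$ compensates by at least one whenever $p$ does drop, so $p + m$ is non-decreasing along the factorisation and strictly positive once $N \geq 1$.

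The main obstacle is the inductive step: the reduced standard form is \emph{not} local with respect to band multiplication, and a single conjugate generator can provoke cascading rewrites. Controlling the pair $(p, m)$ cleanly is essentially the combinatorial heart of Orevkov's algorithm of~\cite{orevkov2015}; I would expect to need to define an auxiliary monovariant on normal forms that tracks quasipositive depth, and verify its behaviour case by case according to where the new band $w \sigma_i w^{-1}$ interacts with the existing syllables $\sigma_1^{a_1} \sigma_2^{a_2} \cdots$.
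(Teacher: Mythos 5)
Note first that the paper does not prove this statement at all: it is quoted directly from Orevkov~\cite[Proposition~6.5(b)]{orevkov2015}, so your attempt can only be judged on its own terms, and as it stands it has genuine gaps at both inequalities. The lower bound $0<p+m$ is exactly the part your induction does not deliver: you assert that multiplying by one band $w\sigma_i w^{-1}$ induces a ``controlled update'' of $(p,m)$ in which $p$ drops by at most one and $m$ compensates, but the conjugating word $w$ is unconstrained, and recomputing the (reduced) standard form after such a multiplication can change $p$ and $m$ by arbitrarily large amounts; you concede yourself that taming this is ``the combinatorial heart'' of Orevkov's algorithm, which is to say the claim you were supposed to prove. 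No monovariant is actually defined or verified, so the inductive step is missing, not merely tedious.

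The upper bound is also not established. Your reduction of $p+m<2e(\beta)$ in case (3) to $3m+5p>0$ is correct, but this does not follow from $p+m>0$ and $e(\beta)\ge 2$: the former only gives $m\ge |p|+1$, hence $3m+5p\ge 3-2|p|$, which is positive only when $p=-1$; even throwing in $\sum_i a_i\ge 2+3|p|$ and $a_i\ge 2$ yields at best $\sum_i(2a_i-1)\ge 4|p|+3$, which falls short of the required $>5|p|$ as soon as $p\le -3$. So for $p$ sufficiently negative your argument proves nothing, and some finer input (this is where Orevkov's actual analysis enters) is unavoidable. Finally, the topological input is misquoted: the closure of a $3$-braid may have up to three components, so $\chi_4$ can be as large as $3$; Rudolph's theorem gives $\chi_4(\widehat{\beta})=3-e(\beta)$ for quasipositive $\beta$, hence only $e(\beta)\ge 0$ (the trivial braid and a single band already violate your ``$\chi_4\le 1$''), so the inequality $e(\beta)\ge 2$ that you lean on in the upper bound is not obtained this way. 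The easy observations that cases (1) and (2) are essentially vacuous for $p<0$ (there $e(\beta)<0$ forces non-quasipositivity except for small $m$) are fine, but the substantive case (3) is where both of your arguments break down.
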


Since quasipositivity is a property of conjugacy classes, it follows that if $\beta$ in reduced standard form is conjugate to $\Psi^{-1} (\varphi)$ and does not satisfy the inequalities of Proposition~\ref{prop:nqp}, then $\varphi$ is non-positive.

\section{Proof of Theorem~\ref{thm:main}}
\label{sec:proof}

Henceforth, for all $k\geq 0$, let 
    \[
        \beta_k = (\sigma_1 \sigma_2)^6 \sigma_1^5 \sigma_2^{-(15 + k)} \in B_3
    \]
and write $\varphi_k = (\tau_\alpha \tau_\beta)^6 \tau_\alpha^5 \tau_\beta^{-(15+k)} \in \Gamma_{1,1}$ for the image of $\beta_k$ under the isomorphism $\Psi : B_3 \rightarrow \Gamma_{1,1}$ defined in Subsection~\ref{subsec:3braids}. Note that by the chain relation in $\Gamma_{1,1}$ we can write $(\tau_\alpha \tau_\beta)^6 = \tau_\delta$, where $\delta$ is a boundary-parallel curve in $\Sigma_{1,1}$. In the next lemma, we show that the monodromies in the statement of Theorem~\ref{thm:main} are non-positive.

\begin{lemma}
\label{lem:phinp}
    For all $k \geq 0$, we have $\varphi_k \notin \Gamma^+_{1,1}$.
\end{lemma}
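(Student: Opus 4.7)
The plan is to apply the quasipositivity obstruction Proposition~\ref{prop:nqp} to the $3$-braid $\beta_k = \Psi^{-1}(\varphi_k) = (\sigma_1\sigma_2)^6\sigma_1^5\sigma_2^{-(15+k)}$; by Theorem~\ref{thm:qp}, this will imply $\varphi_k \notin \Gamma_{1,1}^+$.

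Since $(\sigma_1\sigma_2)^6 = \Delta^4$ is central in $B_3$, we may write $\beta_k = \Delta^4\sigma_1^5\sigma_2^{-(15+k)}$, from which the exponent sum is immediate: $e(\beta_k) = 12 + 5 - (15+k) = 2-k$. The task is then to exhibit a reduced standard form for a conjugate of $\beta_k$ and read off the invariants $p$ and $m$. My approach is to rewrite each $\sigma_2^{-1}$ using the identity $\sigma_2^{-1} = \Delta^{-1}\sigma_2\sigma_1$ and collect the resulting $\Delta^{-1}$ factors on the left via the relations~\eqref{eq:deltarel}. Setting $N = 15+k$, a short induction should give
\[
\sigma_2^{-N} = \Delta^{-N}\cdot u_N,
\]
where $u_N$ is a positive word of exponent sum $2N$ with $N+1$ alternating syllables. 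The leading $\sigma_1^5$ either commutes past $\Delta^{-N}$ (if $N$ is even) or conjugates through to $\sigma_2^5$ (if $N$ is odd), and in either case merges with the first syllable of $u_N$. A suitable cyclic conjugation, chosen to guarantee that the first syllable is a $\sigma_1$-power of exponent at least $2$, will then produce a representative
\[
\Delta^p w^+ \quad\text{with}\quad p = -11-k,\ \ m = 15+k,\ \ \text{and every}\ a_i \geq 2.
\]
Since $m - p = 26 + 2k$ is even, case~(3) of the reduced standard form definition applies.

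With these invariants in hand, the rest is arithmetic. Proposition~\ref{prop:nqp} asserts that, since $p < 0$, quasipositivity of $\beta_k$ forces $0 < p+m < 2e(\beta_k)$. One computes $p + m = 4$ and $2e(\beta_k) = 4 - 2k$, so the upper inequality reads $4 < 4-2k$, failing for every $k \geq 0$. Hence $\beta_k$ is not quasipositive and $\varphi_k \notin \Gamma_{1,1}^+$.

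I anticipate the main obstacle to be the reduced-standard-form calculation itself: although each individual rewrite uses only the defining relations of $B_3$, tracking how $\sigma_1^5\sigma_2^{-N}$ reshapes under the successive substitutions, and choosing cyclic conjugations that simultaneously enforce the leading-$\sigma_1$ convention and the $a_i \geq 2$ condition, requires careful bookkeeping and naturally splits on the parity of $N$. Once the standard form is secured, the numerical verification against Proposition~\ref{prop:nqp} is immediate.
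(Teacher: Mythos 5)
Your proposal is correct in substance and uses the same core machinery as the paper (pass to $\Psi^{-1}(\varphi_k)$, put a conjugate into reduced standard form, and apply Orevkov's obstruction, Proposition~\ref{prop:nqp}), but it differs in how the parameter $k$ is handled. The paper carries out the standard-form computation only for $k=0$, obtaining $\beta_0 \sim \Delta^{-11}\sigma_1^7(\sigma_2^2\sigma_1^2)^7$ with $p+m = 4 = 2e(\beta_0)$, and then disposes of all $k\geq 1$ in one line: since $\beta_0 = \beta_k\sigma_2^{k}$ and $QP(3)$ is closed under multiplication by positive generators, quasipositivity of any $\beta_k$ would force quasipositivity of $\beta_0$. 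You instead compute a reduced standard form for every $k$ directly; your claimed invariants are in fact correct (in both parities of $15+k$ one gets $p = -11-k$, $m = 15+k$, syllable exponents $7,2,2,\dots,2$, so case~(3) applies), and the obstruction then fails uniformly because $p+m = 4$ is constant while $2e(\beta_k) = 4-2k$ decreases. The trade-off is that your route requires carrying out the induction and the parity bookkeeping you only sketch (``a short induction should give''), whereas the paper's monotonicity observation makes the general case free; on the other hand your computation shows the obstruction degenerates further as $k$ grows, which the paper's shortcut hides. One small imprecision: in the odd-$(15+k)$ case a purely cyclic conjugation leaves the leading syllable a power of $\sigma_2$; you also need to conjugate by $\Delta$ (which swaps the generators and changes none of $p$, $m$, $a_i$) to match the convention that $w^+$ begins with a $\sigma_1$-power. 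This is harmless, but worth stating, since Proposition~\ref{prop:nqp} is applied to a representative literally in standard form.
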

\begin{proof}
    Consider $\beta_0 = (\sigma_1 \sigma_2)^6 \sigma_1^5 \sigma_2^{-15} = \Delta^4 \sigma_1^5 \sigma_2^{-15}$. We have
        \begin{align*}
            \beta_0 &= \Delta^4 \sigma_1^5 \sigma_2^{-15} = \Delta^4 \sigma_1^5 \sigma_1 \sigma_2 \Delta^{-1} \sigma_2^{-14} = 
            \Delta^4 \sigma_1^5 \sigma_1 \sigma_2 \sigma_1^{-14} \Delta^{-1} \\
            &= \Delta^4 \sigma_1^5 \sigma_1 \sigma_2 \sigma_2 \sigma_1 \Delta^{-1} \sigma_1^{-13} \Delta^{-1} = \Delta^4 \sigma_1^5 \sigma_1 \sigma_2 \sigma_2 \sigma_1 \sigma_2^{-13} \Delta^{-2} =\dots \\
        \intertext{Applying the identity $\sigma_2^{-k}= \sigma_1 \sigma_2 \sigma_2 \sigma_1 \sigma_2^{-k+2} \Delta^{-2}$ six more times and writing $\sim$ to denote conjugacy in $B_3$, we get}
        \dots &= \Delta^4 \sigma_1^5 (\sigma_1 \sigma_2 \sigma_2 \sigma_1)^7 \sigma_1 \sigma_2 \Delta^{-15} \sim \Delta^{-11} \sigma_1^5 (\sigma_1 \sigma_2 \sigma_2 \sigma_1)^7 \sigma_1 \sigma_2 \\
        & = \Delta^{-11} \sigma_1^6 (\sigma_2^2 \sigma_1^2)^7 \sigma_2 \sim \sigma_2 \Delta^{-11} \sigma_1^6 (\sigma_2^2 \sigma_1^2)^7 = \Delta^{-11} \sigma_1^7 (\sigma_2^2 \sigma_1^2)^7 \eqqcolon \beta_0'.
        \end{align*}

    Observe that $\beta_0'$ is in reduced standard form with $m = 15$ and $p = -11$. Since $e(\beta_0') = 2$, we have $p + m = 4 \nless 4 = 2e(\beta_0')$. By Proposition~\ref{prop:nqp}, $\beta_0'$ is not quasipositive, hence neither is $\beta_0$. Moreover, since every $\beta_k$ for $k \geq 1$ is obtained by adding negative subwords to $\beta_0$, we have $\beta_k \notin QP(3)$ for all $k \geq 0$. By Theorem~\ref{thm:qp} and the discussion in Subsection~\ref{subsec:3braids}, we conclude that $\varphi_k$ is non-positive for all $k \geq 0$.
\end{proof}

\begin{figure}[ht]
        \vspace{-\baselineskip}
        \centering
        \includegraphics[width=\textwidth]{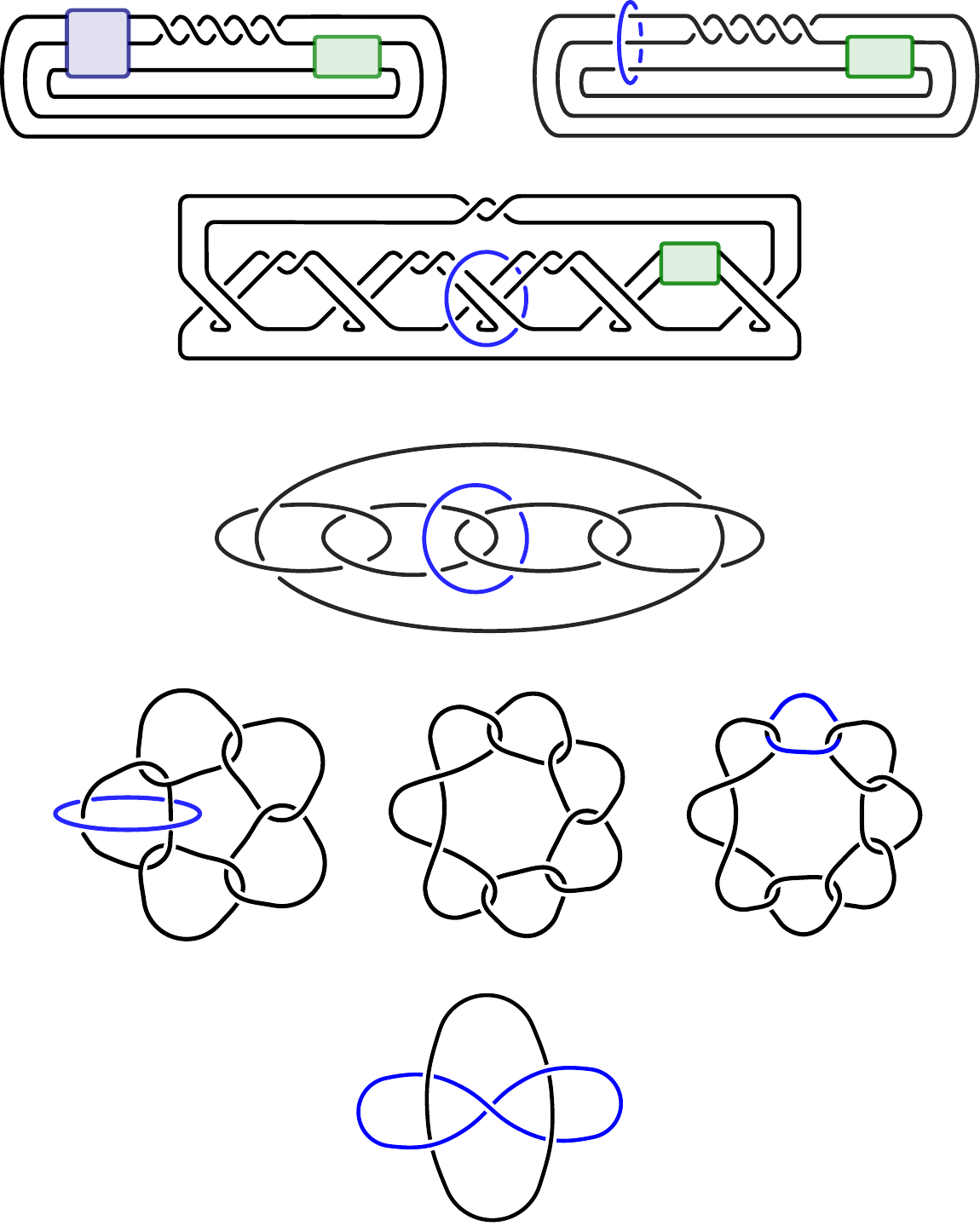}
        \begin{overpic}[width=\linewidth,height=0.001in]{figures/blank.png}

         \put(8.5,123.5) {$\scriptstyle +2$}
         \put(32.5,122.5) {$\scriptscriptstyle -15-k$}

        \put(65.5,128) {$\color{blue}\scriptstyle -1/2$}
        \put(87,122.5) {$\scriptscriptstyle -15-k$}
    
        \put(47,120) {$\xrightarrow[\textrm{up}]{\textrm{blow-}}$}
        \put(83,105){\rotatebox{45}{$\xleftarrow{\textrm{isotopy}}$}}

        \put(47,103.5) {$\color{blue}\scriptstyle -1/2$}
        \put(67.2,101) {$\scriptscriptstyle -17-k$}

        \put(49,85){\rotatebox{90}{$\longrightarrow$}}
        \put(51,86){$2:1$}

        \put(67,81) {$\scriptstyle -2$}
        \put(75,76.5) {$\scriptstyle -17-k$}
        \put(55,78) {$\scriptstyle -2$}
        \put(39,78) {$\scriptstyle -2$}
        \put(22,77) {$\scriptstyle -2$}
        \put(47,80) {$\color{blue}\scriptstyle -1$}

        \put(24,58){\rotatebox{45}{$\xleftarrow{\textrm{isotopy}}$}}
        \put(32.5,45) {$\xrightarrow[\textrm{down}]{\textrm{blow-}}$}
        \put(63.3,45) {$\xrightarrow[\textrm{up}]{\textrm{blow-}}$}
        \put(63,20){\rotatebox{45}{$\xleftarrow[\textrm{and isotopy}]{\textrm{four blow-downs}}$}}

        \put(18,59) {$\scriptstyle -2$}
        \put(28,56) {$\scriptstyle -2$}
        \put(28,34) {$\scriptstyle -2$}
        \put(18,30) {$\scriptstyle -17-k$}
        \put(8,39) {$\scriptstyle -2$}
        \put(5,47.5) {$\color{blue} \scriptstyle -1$}

        \put(53,58.5) {$\scriptstyle -2$}
        \put(60,53) {$\scriptstyle -2$}
        \put(60,36) {$\scriptstyle -2$}
        \put(53,30) {$\scriptstyle -17-k$}
        \put(45,57) {$\scriptstyle -2$}

        \put(81,58.5) {$\color{blue} \scriptstyle +1$}
        \put(87,56) {$\scriptstyle -1$}
        \put(74,56) {$\scriptstyle -1$}
        \put(94.5,45) {$\scriptstyle -2$}
        \put(89,34) {$\scriptstyle -2$}
        \put(81,30) {$\scriptstyle -17-k$}

        \put(39,20) {$\color{blue} \scriptstyle +5$}
        \put(52,2) {$\scriptstyle -15-k$}
        
        \end{overpic}
        \caption{
            $\Sigma_2(\beta_k)$ is diffeomorphic to $\mathrm{Wh}(5, -(15+k))$. Here the blue rectangle is annotated with the number of full twists and the green rectangles with the number of half-twists.\vspace{-\baselineskip}
        } \label{fig:5chain}
    \end{figure}

We will also make use of recent results by Min and Nonino in~\cite{minnonino2023} who classified tight contact structures on a large class of surgeries on the Whitehead link $\mathrm{Wh}$, shown at the bottom of Figure~\ref{fig:5chain}, and determined their Stein fillability. Write $\mathrm{Wh}(r_1, r_2)$ for the $(r_1,r_2)$-surgery on $\mathrm{Wh}$; note that the assignment of the coefficients to the components of $\mathrm{Wh}$ is immaterial as they can be interchanged by an ambient isotopy. Then we have the following:

\begin{thm}[{\cite[Theorem~1.6]{minnonino2023}}]
    \label{thm:sf}
    Let $r_1 \geq 5$ be an integer and $r_2 \in ((-\infty, 0) \cup [2, 4) \cup [5, \infty)) \cap \mathbb{Q}$. Then every tight contact structure on $\mathrm{Wh}(r_1, r_2)$ is Stein-fillable.
\end{thm}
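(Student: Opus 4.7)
The plan is to establish Theorem~\ref{thm:sf} in two stages: first classify all tight contact structures on $\mathrm{Wh}(r_1, r_2)$ for the specified coefficient ranges, and then exhibit a Stein filling for each one.

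First I would perform Kirby calculus to identify the topological type of $\mathrm{Wh}(r_1, r_2)$. Manipulations akin to those depicted in Figure~\ref{fig:5chain} (blow-ups, blow-downs and handle slides) should reduce the surgery description to a plumbing graph, exhibiting the manifold as a graph manifold, or, in favorable cases, as a small Seifert fibered space over $S^2$ with a controlled number of exceptional fibers. The stratification of $r_2$ into the intervals $(-\infty, 0)$, $[2,4)$ and $[5, \infty)$ likely reflects distinct topological regimes or distinct outcomes of the convex-theoretic count that follows; the excluded ranges $[0,2) \cup [4,5)$ presumably harbor tight but non-fillable structures, so the stratification is an essential feature of the statement rather than an artifact of the method.

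Second, I would apply Honda's convex surface theory to enumerate the tight contact structures on the resulting manifolds, using bypass analysis on vertical tori in the Seifert pieces and gluing along JSJ tori when the manifold is toroidal. Finally, for each enumerated tight structure I would construct a Stein filling by Legendrian surgery on a suitable Legendrian realization of the surgery diagram in $(S^3, \xi_{\mathrm{std}})$, stabilizing as needed to attain the prescribed Thurston--Bennequin numbers and hence the prescribed surgery slopes; the resulting handlebody inherits a canonical Stein structure by Eliashberg's theorem.

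The main obstacle is the matching step: proving that the Legendrian-surgery construction realizes \emph{every} tight structure in the convex-theoretic classification rather than a proper subset. This is typically verified by computing homotopy-theoretic invariants (such as relative Euler classes or $\mathrm{Spin}^c$ decorations) on both sides and checking that the two enumerations agree, and it is here that the numerical bounds $r_1 \geq 5$ and the precise intervals for $r_2$ become essential.
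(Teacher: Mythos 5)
This statement is not proved in the paper at all: it is quoted verbatim from Min and Nonino (\cite[Theorem~1.6]{minnonino2023}) and used as a black box in the proof of Theorem~\ref{thm:main}, so there is no internal argument to compare yours against. Treated as a blind attempt to reprove that external result, your outline does capture the standard two-pronged strategy that such classifications follow (an upper bound on the number of tight structures via convex surface theory, a matching lower bound via Legendrian surgery realizations carrying Stein structures by Eliashberg's theorem), but as written it is a research program rather than a proof: the bypass/convex-torus enumeration, the Legendrian realizations, and the matching of the two counts are all described rather than carried out, and the provenance of the hypotheses $r_1 \geq 5$ and $r_2 \in (-\infty,0) \cup [2,4) \cup [5,\infty)$ is left as speculation, whereas these ranges are exactly where the real work lies.

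There is also a concrete step that would fail as stated: the proposed reduction of $\mathrm{Wh}(r_1,r_2)$ to a Seifert fibered space or graph manifold via Kirby calculus. The Whitehead link is hyperbolic, so by Thurston's hyperbolic Dehn surgery theorem all but finitely many of the surgeries in the stated range are hyperbolic manifolds; they are neither Seifert fibered nor graph manifolds, and no sequence of blow-ups, blow-downs and handle slides will exhibit them as plumbings. (This is precisely what makes the Min--Nonino classification notable: classifications of tight contact structures on hyperbolic manifolds are rare.) The convex-theoretic analysis must therefore be performed directly on the Whitehead link exterior, gluing in two solid tori with the prescribed slopes and controlling the dividing sets on the two boundary tori simultaneously, not on Seifert pieces of a JSJ decomposition. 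The Kirby moves of Figure~\ref{fig:5chain} serve a different and much more modest purpose in this paper: they identify the smooth manifold $\Sigma_2(\beta_k)$ with $\mathrm{Wh}(5,-(15+k))$ so that Theorem~\ref{thm:sf} can be invoked; they are not an ingredient in the proof of Theorem~\ref{thm:sf} itself.
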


We are now equipped for the proof of our main result.

\begin{proof}[Proof of Theorem~\ref{thm:main}]
    Let $k \geq 0$ and denote by $(Y_k, \xi_k)$ the contact manifold supported by the open book $(\Sigma_{1,1}, \varphi_k)$. Theorem~4.2 in~\cite{baldwin2008} implies that $(Y_k, \xi_k)$ is tight since the Ozsv\'ath--Szab\'o contact invariant $c(\xi_k) \neq 0$. 
    
    We now show that $Y_k$ is diffeomorphic to $\mathrm{Wh}(5, -(15 + k))$, whence Stein-fillability of $\xi_k$ will follow by Theorem~\ref{thm:sf}. To this end, we note firstly the standard fact that $Y_k$ is diffeomorphic to $\Sigma_2({\beta}_k)$, the double cover of $S^3$ branched over the closure of $\beta_k$ (see, e.g.,~\cite[Lemma~2.1]{baldwin2008}). We then follow the strategy of~\cite[Section~1]{simone2023} (cf.~\cite[Lemma~2.2]{brejevssimone2024}) illustrated in Figure~\ref{fig:5chain}: we may blow up the closure of $\beta_k$ with a $-1/2$-framed unknot and isotop the resulting diagram to make it apparent via standard Akbulut--Kirby techniques~\cite{akbulutkirby1980} that $\Sigma_2(\beta_k)$ is given by the surgery on a 5-chain link with four components framed $-2$ and the fifth $-(17+k)$, and a $-1$-framed unknot threaded through it; this also follows directly from~\cite{simone2023} by writing
    \[
        \beta_k = (\sigma_1 \sigma_2)^6 \left(\sigma_1 \sigma_2^{-(2-2)}\right)^4 \sigma_1 \sigma_2^{-(17 + k - 2)}.
    \]
    By blowing down said unknot, then blowing up the result by a $+1$-framed unknot and performing four further blow-downs on the $-1$-framed components, we obtain a surgery diagram for $\mathrm{Wh}(5, -(15+k))$, as desired. 
    
    Since the monodromy $\varphi_k$ is non-positive by Lemma~\ref{lem:phinp}, the result follows.\qedhere
\end{proof}


\printbibliography

@article{akbulutkirby1980,
  title={Branched covers of surfaces in 4-manifolds},
  author={Akbulut, S. and Kirby, R.},
  journal={Mathematische Annalen},
  volume={252},
  pages={111--131},
  year={1980}
}

@misc{brejevssimone2024,
      title={On chain link surgeries bounding rational homology balls and $\chi$-slice 3-braid closures}, 
      author={V. Brejevs and J. Simone},
      year={2024},
      eprint={2301.06573},
      archivePrefix={arXiv},
      primaryClass={math.GT}
}

@article{thurstonwinkelnkemper1975,
	author = {W. Thurston and H. Winkelnkemper},
	journal = {Proceedings of the American Mathematical Society},
	number = {1},
	pages = {345--347},
	title = {On the existence of contact forms},
	volume = {52},
	year = {1975}}

@article{orevkov2015,
  title={Algorithmic recognition of quasipositive braids of algebraic length two},
  author={Orevkov, S. Yu.},
  journal={Journal of Algebra},
  volume={423},
  pages={1080--1108},
  year={2015}
}

@article{simone2023,
  title={Classification of torus bundles that bound rational homology circles},
  author={Simone, J.},
  journal={Algebraic \& Geometric Topology},
  volume={23},
  number={6},
  pages={2449--2518},
  year={2023},
}

@article{baldwin2008,
  title={Heegaard Floer homology and genus one, one-boundary component open books},
  author={Baldwin, J.},
  journal={Journal of Topology},
  volume={1},
  number={4},
  pages={963--992},
  year={2008}
}

@misc{minnonino2023,
      title={Tight contact structures on a family of hyperbolic L-spaces}, 
      author={H. Min and I. Nonino},
      year={2023},
      eprint={2311.14103},
      archivePrefix={arXiv},
      primaryClass={math.GT}
}

@article{orevkov2004,
      title={Quasipositivity problem for 3-braids},
      author={Orevkov, S. Yu.},
      journal={Turkish Journal of Mathematics},
      volume={28},
      number={1},
      pages={89--94},
      year={2004}
}

@article{itokawamuro2019,
        author = {T. Ito and K. Kawamuro},
        title = {{Positive factorizations of symmetric mapping classes}},
        volume = {71},
        journal = {Journal of the Mathematical Society of Japan},
        number = {1},
        publisher = {Mathematical Society of Japan},
        pages = {309--327},
        year = {2019}
}

@article{brejevswand2023,
	author = {V. Brejevs and A. Wand},
	journal = {Mathematical Research Letters},
	pages = {709--719},
	title = {Stein-fillable open books of genus one that do not admit positive factorisations},
	volume = {30},
        number = {3},
	year = {2023}}

@article{lisca2014,
	author = {Lisca, P.},
	journal = {Geometry \& Topology},
	pages = {2411-2430},
	title = {Stein fillable contact 3--manifolds and positive open books of genus one},
	volume = {14},
	year = {2014}}

@article{BEvHM2010,
	author = {Baker, K. and Etnyre, J. and Van Horn-Morris, J.},
	journal = {Journal of Differential Geometry},
	number = {1},
	pages = {1--80},
	title = {Cabling, contact structures and mapping class monoids},
	volume = {90},
	year = {2010}}

@article{wendl2010,
	author = {Wendl, C.},
	journal = {Duke Mathematical Journal},
	number = {3},
	pages = {337-384},
	title = {Strongly fillable contact manifolds and $ J $-holomorphic foliations},
	volume = {151},
	year = {2010}}

@article{gompf1998,
	author = {R. E. Gompf},
	journal = {Annals of Mathematics},
	number = {2},
	pages = {619--693},
	title = {Handlebody construction of Stein surfaces},
	volume = {148},
	year = {1998}}

@article{eliashberg1990,
	author = {Y. Eliashberg},
	date-added = {2021-03-16 22:26:30 +0000},
	date-modified = {2021-03-16 22:27:40 +0000},
	journal = {International Journal of Mathematics},
	number = {1},
	pages = {29--46},
	title = {{Topological characterization of Stein manifolds of dimension $>2$}},
	volume = {1},
	year = {1990}}

@article{plamenevskaya2004,
	author = {O. Plamenevskaya},
	date-added = {2021-03-10 00:04:31 +0000},
	date-modified = {2021-03-10 00:05:36 +0000},
	journal = {Mathematical Research Letters},
	pages = {547--561},
	title = {{Contact structures with distinct Heegaard Floer invariants}},
	volume = {11},
	year = {2004}}

@article{akbulutozbagci2001,
	author = {Akbulut, S. and {\"O}zba{\u g}cı, B.},
	date-modified = {2021-03-10 00:46:29 +0000},
	journal = {Geometry \& Topology},
	number = {1},
	pages = {319--334},
	title = {{Lefschetz fibrations on compact Stein surfaces}},
	volume = {5},
	year = {2001},
	Bdsk-Url-1 = {https://doi.org/10.2140/gt.2001.5.319}}

@article{loipiergallini2001,
	author = {Loi, A. and Piergallini, R.},
	date-added = {2021-03-16 16:35:21 +0000},
	date-modified = {2021-03-16 16:36:29 +0000},
	journal = {Inventiones Mathematicae},
	pages = {325--348},
	title = {{Compact Stein surfaces with boundary as branched covers of $B^4$}},
	volume = {143},
	year = {2001}}

@inproceedings{giroux2002,
	author = {E. Giroux},
	booktitle = {Proceedings of the International Congress of Mathematicians},
	pages = {405-414},
	title = {G\'eom\'etrie de contact: de la dimension trois vers les dimensions sup\'erieures},
	year = {2002}}

@article{wand2015,
	author = {Wand, A.},
	journal = {Geometry \& Topology},
	pages = {2407-2464},
	title = {Factorizations of diffeomorphisms of compact surfaces with boundary},
	volume = {19},
	year = {2015}}

@article{wand2011,
   title={Mapping class group relations, Stein fillings, and planar open book decompositions},
   volume={5},
   number={1},
   journal={Journal of Topology},
   author={Wand, A.},
   year={2011},
   pages={1--14} }

@inproceedings{ggp2020,
  title={Surface singularities and planar contact structures},
  author={Ghiggini, P. and Golla, M. and Plamenevskaya, O.},
  booktitle={Annales de l'Institut Fourier},
  volume={70},
  number={4},
  pages={1791--1823},
  year={2020}
}

@article{plamenevskaya2010,
   title={Planar open books, monodromy factorizations and symplectic fillings},
   volume={14},
   number={4},
   journal={Geometry \& Topology},
   publisher={Mathematical Sciences Publishers},
   author={Plamenevskaya, O. and Van Horn-Morris, J.},
   year={2010},
   pages={2077--2101} }

@article{etnyrevhm2015,
  title={Monoids in the mapping class group},
  author={Etnyre, J. and Van Horn-Morris, J.},
  journal={Geometry \& Topology Monographs},
  volume={19},
  number={1},
  pages={319--365},
  year={2015},
  publisher={Mathematical Sciences Publishers}
}

@book{farbmargalit2011,
  title={A primer on mapping class groups},
  author={Farb, B. and Margalit, D.},
  volume={49},
  series={Princeton Mathematical Series},
  year={2012},
  publisher={Princeton University Press}
}

\end{document}